\theoremstyle{plain}
\newtheorem{thm}{\protect\theoremname}
  \theoremstyle{plain}
  \newtheorem{lem}[thm]{\protect\lemmaname}
  \theoremstyle{plain}
  \newtheorem{prop}[thm]{\protect\propositionname}
  \theoremstyle{remark}
  \newtheorem{rem}[thm]{\protect\remarkname}
  \providecommand{\lemmaname}{Lemma}
  \providecommand{\propositionname}{Proposition}
  \providecommand{\remarkname}{Remark}
\providecommand{\theoremname}{Theorem}
\begin{document}

\title{Singularity of generalized grey Brownian motions with different parameters}

\author{\textbf{Jos\textbf{{\'e}} Lu\textbf{{\'\i}}s da Silva},\\
 CIMA, University of Madeira, Campus da Penteada,\\
 9020-105 Funchal, Portugal.\\
 Email: luis@uma.pt\and \textbf{Mohamed Erraoui}\\
 Universit{\'e} Cadi Ayyad, Facult{\'e} des Sciences Semlalia,\\
 D{\'e}partement de Math{\'e}matiques, BP 2390, Marrakech, Maroc\\
 Email: erraoui@uca.ma}
\maketitle
\begin{abstract}
In this note we prove that the probability measures generated by two
generalized grey Brownian motions with different parameters are singular
with respect to each other. This result can be interpreted as an extension
of the Feldman-H{\'a}jek dichotomy of Gaussian measures to a family
of non-Gaussian measures.\medskip{}

\noindent \textbf{Keywords}: Generalized grey Brownian motion, Fractional
Brownian motion, $p$-variation, Mixing, Ergodicity. 
\end{abstract}

\section{Introduction }

According to the Feldman-H{\'a}jek dichotomy (cf.\ \cite{Feldman58,Hajek58})
a pair of Gaussian measures on the space of functions on an interval
are either mutually singular or equivalent. This result has led to
numerous attempts to obtain convenient criteria for deciding between
the two possibilities. In a paper on estimation of the Hurst index
for fractional Brownian motion (fBm for short) \cite{Kurchenko03},
Kurchenko derived a Baxter-type theorem for the fBm based on the second
order increments of the process. Based on Kurchenko's result, Prakasa
Rao, in \cite{Rao2008}, has investigated sufficient conditions under
which probability measures generated by two fBms are singular with
respect to each other. Inspired by the work of Cameron and Martin
\cite{Cameron-Martin47}, we provide another proof of Prakasa Rao's
result for fBm. To this end, we use the results on the variation of
fBm from Rogers, cf.\ \cite{Rogers1997}. This is the content of
Section\ \ref{sec:singularity-fBm}.

In the second part of this work we establish a similar result for
a $(\alpha,\beta)$-family of non-Gaussian measures, so-called generalized
grey noise measures (ggnm) $\mu_{\alpha,\beta}$ associated to the
generalized grey Brownian motion (ggBm) $B_{\alpha,\beta}$. To accomplished
that, at first we study the $p$-variation of the ggBm and then prove
the singularity of two ggnm with different parameters, cf.\ Section\ \ref{sec:ggBm}.

\section{Singularity of fractional Brownian motions with different Hurst indices}

\label{sec:singularity-fBm}

Let $\left\{ W_{H}(t),t\in[0,1]\right\} $ be standard fBm with Hurst
index $H\in(0,1)$ defined on a complete probability space $\left(\Omega,\mathcal{F},P\right)$.
Let $C([0,1]):=\big(C([0,1]),\mathcal{B}\big)$ be the space of continuous
functions with the $\sigma$-algebra $\mathcal{B}$ generated by the
cylinder sets. We denote by $P_{W_{H}}$ the probability measure induced
by $W_{H}$ on $C([0,1])$. The self-similarity of the fBm and the
ergodic theorem imply that the fBm has $\nicefrac{1}{H}$-variation
on the time interval $[0,1]$, see Rogers \cite{Rogers1997}. That
is, with probability $1$, we have 
\begin{equation}
\lim_{n\rightarrow\infty}\sum_{j=1}^{2^{n}}\left|W_{H}\left(\frac{j}{2^{n}}\right)-W_{H}\left(\frac{j-1}{2^{n}}\right)\right|^{\nicefrac{1}{H}}=\mathbb{E}\big(\big|W_{H}(1)\big|^{\nicefrac{1}{H}}\big)=:\mu_{H}.\label{eq:variation}
\end{equation}
Equivalently, for $P_{W_{H}}$ almost all $x\in C\big([0,1]\big)$
, we have 
\begin{equation}
\lim_{n\rightarrow\infty}\sum_{j=1}^{2^{n}}\left|x\left(\dfrac{j}{2^{n}}\right)-x\left(\dfrac{j-1}{2^{n}}\right)\right|^{\nicefrac{1}{H}}=\mu_{H}.\label{eq:equiv-variation}
\end{equation}
For each $p\in]0,+\infty)$ we introduce the following notations: 
\begin{enumerate}
\item Let $D^{p}$ be the subset of $C\big([0,1]\big)$ given by
\[
D^{p}:=\left\{ x\in C\big([0,1]\big)\,\Big|\,\lim_{n\rightarrow\infty}\sum_{j=1}^{2^{n}}\left|x\left(\frac{j}{2^{n}}\right)-x\left(\frac{j-1}{2^{n}}\right)\right|^{p}\;\mathrm{does\;not\;exist}\right\} .
\]
\item For each $\lambda\geq0$ let $C_{\lambda}^{p}$ be the subset of $C\big([0,1]\big)$
defined by 
\[
C_{\lambda}^{p}:=\left\{ x\in C\big([0,1]\big)\,\Big|\,\lim_{n\rightarrow\infty}\sum_{j=1}^{2^{n}}\left|x\left(\frac{j}{2^{n}}\right)-x\left(\frac{j-1}{2^{n}}\right)\right|^{p}=\lambda\right\} .
\]
\end{enumerate}
Now we give some properties satisfied by the sets $C_{\lambda}^{p},\,\lambda\geq0$
and $D^{p}$.
\begin{enumerate}
\item[\textbf{(i)}] The sets $C_{\lambda}^{p},\,\lambda\geq0$ and $D^{p}$ are all disjoint,
and 
\begin{equation}
C\big([0,1]\big)=\left(\bigcup_{0\leq\lambda}C_{\lambda}^{p}\right)\cup D^{p}.\label{eq:decomposition}
\end{equation}
\item[\textbf{(ii)}] For all $q>p$ and $\lambda\geq0$, we have 
\begin{equation}
C_{\lambda}^{p}\subset C_{0}^{q}.\label{incl prop}
\end{equation}

Indeed, for any $x\in C_{\lambda}^{p}$ yields 
\begin{align}
\sum_{j=1}^{2^{n}}\left|x\left(\dfrac{j}{2^{n}}\right)-x\left(\dfrac{j-1}{2^{n}}\right)\right|^{q} & \leq\left(\sup_{1\leq j\leq2^{n}}\left|x\left(\dfrac{j}{2^{n}}\right)-x\left(\dfrac{j-1}{2^{n}}\right)\right|^{q-p}\right)\label{eq:variation inequality set}\\
 & \times\sum_{j=1}^{2^{n}}\left|x\left(\dfrac{j}{2^{n}}\right)-x\left(\dfrac{j-1}{2^{n}}\right)\right|^{p}.\nonumber 
\end{align}
Now using the uniform continuity of $x$ in $[0,1]$, the right hand
side of \eqref{eq:variation inequality set} converge to $0$ as $n\rightarrow\infty$.
Hence the inclusion (\ref{incl prop}) follows.
\item[\textbf{(iii)}] By passing at complements in (\ref{incl prop}), we equivalently
have 
\[
\left(\bigcup_{0<\lambda}C_{\lambda}^{q}\right)\cup D^{q}\subset D^{p},\quad\forall q>p.
\]
\end{enumerate}
An interpretation of ${\bf {(ii)}}$ and ${\bf {(iii)}}$ is: if the
$p$-variation exists and it is nonzero then for any $q>p$ the $q$-variation
is zero and for any $q<p$ the $q$-variation is infinite.

A a consequence of (\ref{eq:equiv-variation}), the $P_{W_{H}}$-measures
of the sets $C_{\lambda}^{\nicefrac{1}{H}},\,\lambda\geq0$ and $D^{\nicefrac{1}{H}}$
are as follows:
\begin{lem}
\label{set meas} The $P_{W_{H}}$-measures of the sets $C_{\lambda}^{\nicefrac{1}{H}},\,\lambda\geq0$
and $D^{\nicefrac{1}{H}}$ are: 
\[
P_{W_{H}}\left(\bigcup_{0\leq\lambda\neq\mu_{H}}C_{\lambda}^{\nicefrac{1}{H}}\right)=P_{W_{H}}(D^{\nicefrac{1}{H}})=0,\quad P_{W_{H}}(C_{\mu_{H}}^{\nicefrac{1}{H}})=1.
\]
\end{lem}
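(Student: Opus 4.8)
The plan is to read off all three equalities directly from the almost-sure convergence \eqref{eq:equiv-variation} together with the disjointness recorded in property \textbf{(i)}; no new estimates are needed, since the statement is pure measure-theoretic bookkeeping. Write $S_{n}(x):=\sum_{j=1}^{2^{n}}\big|x(j/2^{n})-x((j-1)/2^{n})\big|^{\nicefrac{1}{H}}$ for the $n$-th dyadic sum. First I would record measurability: each evaluation map $x\mapsto x(t)$ is measurable with respect to the cylinder $\sigma$-algebra $\mathcal{B}$, so every $S_{n}$ is measurable, and hence so are $\limsup_{n}S_{n}$ and $\liminf_{n}S_{n}$. Consequently $C_{\mu_{H}}^{\nicefrac{1}{H}}=\{x:\lim_{n}S_{n}(x)=\mu_{H}\}$ is measurable, and $D^{\nicefrac{1}{H}}=\{x:\liminf_{n}S_{n}(x)<\limsup_{n}S_{n}(x)\}$ (together with the case of an infinite limit) is measurable as well.

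The heart of the argument is the observation that \eqref{eq:equiv-variation} is nothing but the statement that, for $P_{W_{H}}$-almost every $x$, the limit $\lim_{n}S_{n}(x)$ exists and equals $\mu_{H}$; that is, $P_{W_{H}}(C_{\mu_{H}}^{\nicefrac{1}{H}})=1$. The remaining two equalities then follow by complementation. By property \textbf{(i)} the sets $C_{\lambda}^{\nicefrac{1}{H}}$, $\lambda\geq0$, and $D^{\nicefrac{1}{H}}$ partition $C([0,1])$ into pairwise disjoint pieces, so the complement of $C_{\mu_{H}}^{\nicefrac{1}{H}}$ is exactly the disjoint union $\big(\bigcup_{0\leq\lambda\neq\mu_{H}}C_{\lambda}^{\nicefrac{1}{H}}\big)\cup D^{\nicefrac{1}{H}}$, a set of $P_{W_{H}}$-measure zero. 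Since the two pieces are disjoint and each is contained in this null set, both have measure zero, giving $P_{W_{H}}(D^{\nicefrac{1}{H}})=0$ and $P_{W_{H}}\big(\bigcup_{0\leq\lambda\neq\mu_{H}}C_{\lambda}^{\nicefrac{1}{H}}\big)=0$.

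The one step requiring care—and the only (mild) obstacle—is the measurability of the uncountable union $\bigcup_{0\leq\lambda\neq\mu_{H}}C_{\lambda}^{\nicefrac{1}{H}}$, to which countable subadditivity does not immediately apply. I would circumvent this by rewriting it intrinsically: it coincides with the set on which $\lim_{n}S_{n}(x)$ exists and is finite but differs from $\mu_{H}$, namely $\{x:\liminf_{n}S_{n}(x)=\limsup_{n}S_{n}(x)<\infty\}\setminus C_{\mu_{H}}^{\nicefrac{1}{H}}$. Both sets on the right are measurable by the first paragraph, so the union is measurable; being a subset of the null complement of $C_{\mu_{H}}^{\nicefrac{1}{H}}$, it has measure zero. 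This completes the proof, the conclusion being entirely a consequence of \eqref{eq:equiv-variation} and the partition in \textbf{(i)}.
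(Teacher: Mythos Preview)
Your proof is correct and follows exactly the route the paper intends: the paper presents Lemma~\ref{set meas} simply as ``a consequence of \eqref{eq:equiv-variation}'' and gives no further argument, so your write-up is a faithful (and more careful) unpacking of that one-line justification, with the added attention to measurability of the uncountable union being a detail the paper leaves implicit.
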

Let $\big\{ W_{H_{i}}(t),t\in[0,1]\big\}$, $i=1,2$ be two fBms with
Hurst indices $H_{1}\neq H_{2}$. We will now show that the probability
measures induced by these processes $P_{W_{H_{1}}}$ , $P_{W_{H_{2}}}$
are singular with respect to each other. We state it in Theorem\ \ref{thm:singular-fBm-meaures}
below. The proof is based on the following lemma. 
\begin{lem}
\label{lem: sing}Assume that $H_{1}<H_{2}$. Then we have
\begin{enumerate}
\item $C_{\mu_{H_{2}}}^{\nicefrac{1}{H_{2}}}\subset C_{0}^{\nicefrac{1}{H_{1}}}$, 
\item $C_{\mu_{H_{1}}}^{\nicefrac{1}{H_{1}}}\subset D^{\nicefrac{1}{H_{2}}}$. 
\end{enumerate}
\end{lem}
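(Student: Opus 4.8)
The plan is to recognize that both inclusions are immediate consequences of the abstract properties (ii) and (iii) established above, once we translate the ordering of the Hurst indices into an ordering of the variation exponents. Since $H_{1}<H_{2}$, we have $\nicefrac{1}{H_{1}}>\nicefrac{1}{H_{2}}$, so throughout I set $p=\nicefrac{1}{H_{2}}$ and $q=\nicefrac{1}{H_{1}}$, which satisfy $q>p$. The whole proof then amounts to feeding the right values of $\lambda$ into the inclusions \eqref{incl prop} and its complementary form in (iii).

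For part 1 I would apply \eqref{incl prop} directly. Taking $\lambda=\mu_{H_{2}}\geq 0$ with the above $p$ and $q$ gives $C_{\mu_{H_{2}}}^{\nicefrac{1}{H_{2}}}=C_{\lambda}^{p}\subset C_{0}^{q}=C_{0}^{\nicefrac{1}{H_{1}}}$, which is exactly the assertion. No further work is needed: the content of part 1 is just the statement that existence of a (possibly zero) $p$-variation forces the $q$-variation to vanish for every $q>p$.

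For part 2 the key additional ingredient is the strict positivity of $\mu_{H_{1}}$. Since $W_{H_{1}}(1)$ is standard normal, $\mu_{H_{1}}=\mathbb{E}\big(|W_{H_{1}}(1)|^{\nicefrac{1}{H_{1}}}\big)>0$, so $C_{\mu_{H_{1}}}^{\nicefrac{1}{H_{1}}}$ is one of the sets appearing in the union $\bigcup_{0<\lambda}C_{\lambda}^{q}$ indexed by \emph{strictly} positive $\lambda$. Property (iii), which reads $\big(\bigcup_{0<\lambda}C_{\lambda}^{q}\big)\cup D^{q}\subset D^{p}$, then yields $C_{\mu_{H_{1}}}^{\nicefrac{1}{H_{1}}}\subset D^{\nicefrac{1}{H_{2}}}$, as required. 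The point worth stressing is that this strict positivity is precisely what separates the two parts: in part 1 the target exponent is the larger one, so any finite $p$-variation collapses the $q$-variation to $0$; in part 2 the target exponent is the smaller one, and a nonzero $q$-variation, guaranteed exactly because $\mu_{H_{1}}>0$, forces the $p$-variation to diverge. There is no genuine obstacle beyond verifying $\mu_{H_{1}}>0$, since the analytic work was already carried out in deriving \eqref{incl prop}.
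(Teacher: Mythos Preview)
Your proof is correct and follows exactly the paper's approach: the paper's own proof simply states that assertions 1.\ and 2.\ are consequences of \textbf{(ii)} and \textbf{(iii)} respectively. Your write-up is a faithful expansion of that one-line argument, with the helpful explicit observation that $\mu_{H_{1}}>0$ is what allows $C_{\mu_{H_{1}}}^{\nicefrac{1}{H_{1}}}$ to sit inside the union $\bigcup_{0<\lambda}C_{\lambda}^{q}$ appearing in \textbf{(iii)}.
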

\begin{proof}
\label{proof-lemma}The assertions 1. and 2. are consequences of ${\bf {(ii)}}$
and ${\bf {(iii)}}$ respectively.
\end{proof}
Now we are able to state the main result of this section 
\begin{thm}
\label{thm:singular-fBm-meaures}The probability measures $P_{W_{H_{1}}}$
and $P_{W_{H_{2}}}$ are singular with respect to each other. 
\end{thm}
\begin{proof}
It follows from Lemmas \ref{set meas} and \ref{lem: sing} that
$P_{W_{H_{1}}}\big(C_{\mu_{H_{2}}}^{\nicefrac{1}{H_{2}}}\big)=P_{W_{H_{2}}}\big((C_{\mu_{H_{2}}}^{\nicefrac{1}{H_{2}}})^{c}\big)=0$,
which means that the probability measures $P_{W_{H_{1}}}$ and $P_{W_{H_{2}}}$
are singular. 
\end{proof}

\section{Generalized grey Brownian motion}

\label{sec:ggBm}

Let $0<\alpha<2$ and $0<\beta\leq1$ be given. A continuous stochastic
process is a generalized grey Brownian motion noted by $\{B_{\alpha,\beta},\,t\geq0\}$,
if :
\begin{enumerate}
\item $B_{\alpha,\beta}(0)=0$ $P$-a.s.
\item Any collection $X=\big\{ B_{\alpha,\beta}(t_{1}),\ldots,B_{\alpha,\beta}(t_{n})\big\}$
with $0\leq t_{1}<t_{2}<\ldots<t_{n}<\infty$ has characteristic function
given by 
\begin{equation}
\mathbb{E}\left(\exp\left(i\sum_{i=1}^{n}\theta_{i}B_{\alpha,\beta}(t_{i})\right)\right)=E_{\beta}\left(-\frac{1}{2}\theta^{\top}\Sigma_{\alpha}\theta\right),\quad\label{cract func}
\end{equation}
\[
\theta=(\theta_{1},\ldots,\theta_{n})\in\mathbb{R}^{n},\;\Sigma_{\alpha}=\big(t_{i}^{\alpha}+t_{j}^{\alpha}-|t_{i}-t_{j}|^{\alpha}\big)_{i,j=1}^{n}
\]
and the joint probability density function is given by:
\[
f_{\alpha,\beta}(\theta,\Sigma_{\alpha})=\frac{(2\pi)^{-\frac{n}{2}}}{\sqrt{\det\Sigma_{\alpha}}}\int_{0}^{\infty}\tau^{-\frac{n}{2}}e^{-\frac{1}{2\tau}\theta^{\top}\Sigma_{\alpha}^{-1}\theta}M_{\beta}(\tau)\,d\tau.
\]
\end{enumerate}
Here $E_{\beta}$ is the Mittag-Leffler (entire) function 
\[
E_{\beta}(z)=\sum_{n=0}^{\infty}\frac{z^{n}}{\Gamma(\beta n+1)},\qquad z\in\mathbb{C},
\]
and where $M_{\beta}$ is the so-called $M$-Wright probability density
function with Laplace transform 
\begin{equation}
\int_{0}^{\infty}e^{-s\tau}M_{\beta}(\tau)\,d\tau=E_{\beta}(-s).\label{eq:M_wright}
\end{equation}
The absolute moments of order $\delta>-1$ in $\mathbb{R}^{+}$ are
finite and turn out to be 
\begin{equation}
\int_{0}^{\infty}\tau^{\delta}M_{\beta}(\tau)\,d\tau=\frac{\Gamma(\delta+1)}{\Gamma(\beta\delta+1)}.\label{moments}
\end{equation}

The generalized grey Brownian motion has the following properties: 
\begin{enumerate}
\item For each $t\geq0$, the moments of any order are given by 
\[
\begin{cases}
\mathbb{E}(B_{\alpha,\beta}^{2n+1}(t)) & =0,\\
\noalign{\vskip4pt}\mathbb{E}(B_{\alpha,\beta}^{2n}(t)) & =\frac{(2n)!}{2^{n}\Gamma(\beta n+1)}t^{n\alpha}.
\end{cases}
\]
\item The covariance function has the form 
\begin{equation}
\mathbb{E}\big(B_{\alpha,\beta}(t)B_{\alpha,\beta}(s)\big)=\frac{1}{2\Gamma(\beta+1)}\big(t^{\alpha}+s^{\alpha}-|t-s|^{\alpha}\big),\quad t,s\geq0.\label{eq:auto-cv-gBm}
\end{equation}
\item For each $t,s\geq0$, the characteristic function of the increments
is 
\begin{equation}
\mathbb{E}\big(e^{i\theta(B_{\alpha,\beta}(t)-B_{\alpha,\beta}(s))}\big)=E_{\beta}\left(-\frac{\theta^{2}}{2}|t-s|^{\alpha}\right),\quad\theta\in\mathbb{R}.\label{eq:cf_gBm_increments}
\end{equation}
\end{enumerate}
All these properties may be summarized as follows. For any $0<\alpha<2$
and $0<\beta\leq1$, the ggBm $B_{\alpha,\beta}(t)$, $t\geq0$, is
$\frac{\alpha}{2}$-self-similar with stationary increments. We refer
to \cite{Mura_mainardi_09} for the proof and more details. This class
includes fBm for $\beta=1$, and Brownian motion (Bm) for $\alpha=\beta=1$.
We note also that Equation (\ref{cract func}) shows that ggBm, which
is not Gaussian in general, is a stochastic process defined only through
its first and second moments which is a property of Gaussian processes.

Finally, it was shown in \cite{Mura_Pagnini_08} that the ggBm $B_{\alpha,\beta}$
admits the following representation 
\begin{equation}
\big\{ B_{\alpha,\beta}(t),\;t\geq0\big\}\overset{d}{=}\big\{\sqrt{Y_{\beta}}B_{H}(t),\;t\geq0\big\},\label{gbm-rep}
\end{equation}
where $\overset{d}{=}$ denotes the equality of the finite dimensional
distribution and $B_{H}$ is a standard fBm with Hurst parameter $H=\alpha/2$.
$Y_{\beta}$ is an independent non-negative random variable with probability
density function $M_{\beta}(\tau)$, $\tau\geq0$.

\subsection{The p-variation of generalized grey Brownian motion}

This subsection is devoted to the study of the $p$-variation of ggBm.
The approach taken is inspired from the one used for the fBm. To do
this we will need to establish some properties satisfied by the ggBm
$B_{\alpha,\beta}$. 
\begin{lem}
The stationary sequence $\big(B_{\alpha,\beta}(j)-B_{\alpha,\beta}(j-1)\big)_{j\geq1}$
is mixing. Therefore it is ergodic.
\end{lem}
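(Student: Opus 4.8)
The plan is to exploit the subordination representation \eqref{gbm-rep}, which writes the increment sequence as $B_{\alpha,\beta}(j)-B_{\alpha,\beta}(j-1)\overset{d}{=}\sqrt{Y_{\beta}}\,\xi_{j}$, where $\xi_{j}:=B_{H}(j)-B_{H}(j-1)$ is the stationary fractional Gaussian noise associated with $H=\alpha/2$, and $Y_{\beta}$ is independent of $(\xi_{j})$ with density $M_{\beta}$. Since mixing and ergodicity are properties of the law of the sequence on $\mathbb{R}^{\mathbb{N}}$ under the shift, and \eqref{gbm-rep} is an equality of all finite-dimensional distributions, it suffices to analyse the subordinated Gaussian sequence $\bigl(\sqrt{Y_{\beta}}\,\xi_{j}\bigr)_{j\geq1}$.

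First I would record that the fractional Gaussian noise $(\xi_{j})$ is itself mixing. It is a centred stationary Gaussian sequence, and from \eqref{eq:auto-cv-gBm} with $\beta=1$ its covariance at lag $r$ equals $\tfrac12\bigl(|r+1|^{\alpha}-2|r|^{\alpha}+|r-1|^{\alpha}\bigr)$, which behaves like $\tfrac12\alpha(\alpha-1)|r|^{\alpha-2}$ and hence tends to $0$ as $r\to\infty$ for every $\alpha\in(0,2)$. By the classical criterion for stationary Gaussian sequences, vanishing of the covariance at infinity implies mixing, and mixing in turn implies ergodicity. This disposes of the purely Gaussian factor.

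The main obstacle is to transfer the mixing property through the common random scale $\sqrt{Y_{\beta}}$: because the single variable $Y_{\beta}$ multiplies every increment, it is measurable with respect to the tail (indeed the shift-invariant) $\sigma$-field of $\bigl(\sqrt{Y_{\beta}}\,\xi_{j}\bigr)$, so one cannot simply inherit mixing from $(\xi_{j})$. I would handle this by a conditioning argument, freezing $Y_{\beta}=y$, applying the mixing of $(\xi_{j})$ to pairs of well-separated blocks of increments, and then integrating against $M_{\beta}$. Concretely, I would test mixing on the joint characteristic functions supplied by \eqref{cract func}: for two blocks of increments separated by a lag $n$, the joint characteristic function has the form $E_{\beta}\bigl(-\tfrac12(u,v)^{\top}\Sigma^{(n)}(u,v)\bigr)$, whose off-diagonal cross-covariances vanish as $n\to\infty$, so the argument of $E_{\beta}$ splits into the sum of the two block quadratic forms. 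The crux is therefore to control $E_{\beta}$ under this splitting and to reconcile it with the product of the one-block characteristic functions; this is immediate for $\beta=1$, where $E_{1}=\exp$ is multiplicative, and is the delicate point for $0<\beta<1$, where the failure of $E_{\beta}(a+b)=E_{\beta}(a)E_{\beta}(b)$ must be absorbed by the averaging over $Y_{\beta}$. Once the mixing convergence is secured, ergodicity follows from the standard implication that mixing entails ergodicity.
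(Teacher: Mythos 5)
You have isolated exactly the right difficulty --- the common factor $\sqrt{Y_{\beta}}$ is measurable with respect to the shift-invariant $\sigma$-field --- but the step you defer (``the failure of $E_{\beta}(a+b)=E_{\beta}(a)E_{\beta}(b)$ must be absorbed by the averaging over $Y_{\beta}$'') is precisely the step that cannot be carried out. Run your own characteristic-function test to the end: for two blocks separated by lag $n$, mixing of the fractional Gaussian noise makes the cross-covariances vanish, so the joint characteristic function converges to $E_{\beta}\bigl(-\tfrac12(Q_{1}(u)+Q_{2}(v))\bigr)$ with $Q_{1},Q_{2}$ the two block quadratic forms, whereas mixing of the subordinated sequence would require the limit $E_{\beta}\bigl(-\tfrac12 Q_{1}(u)\bigr)E_{\beta}\bigl(-\tfrac12 Q_{2}(v)\bigr)$. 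By \eqref{eq:M_wright} these are $\mathbb{E}\bigl(e^{-\frac{Y_{\beta}}{2}(Q_{1}+Q_{2})}\bigr)$ and $\mathbb{E}\bigl(e^{-\frac{Y_{\beta}}{2}Q_{1}}\bigr)\mathbb{E}\bigl(e^{-\frac{Y_{\beta}}{2}Q_{2}}\bigr)$; taking $Q_{1}=Q_{2}=Q>0$, Cauchy--Schwarz gives $\mathbb{E}(e^{-Y_{\beta}Q})\geq\bigl(\mathbb{E}(e^{-Y_{\beta}Q/2})\bigr)^{2}$ with equality only if $Y_{\beta}$ is a.s.\ constant. Since $M_{\beta}$ is non-degenerate for $0<\beta<1$ (its variance $\tfrac{2}{\Gamma(2\beta+1)}-\tfrac{1}{\Gamma(\beta+1)^{2}}$ is positive), the two blocks remain asymptotically dependent: the averaging does not absorb the discrepancy, it creates it. Your own opening observation says the same thing more directly: $\lim_{n}\tfrac1n\sum_{j\leq n}\bigl(\sqrt{Y_{\beta}}\xi_{j}\bigr)^{2}=Y_{\beta}\,\mathbb{E}(\xi_{1}^{2})$ is a non-degenerate shift-invariant functional of the sequence, so the increment sequence is not ergodic, hence not mixing, for any $0<\beta<1$. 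Only the Gaussian case $\beta=1$, where $E_{1}=\exp$ is multiplicative and $Y_{1}\equiv1$, goes through.

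For comparison, the paper's proof tests mixing through the decay of correlations $\mathrm{Cov}\bigl(f(B_{\alpha,\beta}(1)),g(B_{\alpha,\beta}(j)-B_{\alpha,\beta}(j-1))\bigr)\to0$ and asserts that this covariance equals $\int_{0}^{\infty}M_{\beta}(\tau)\,\mathrm{Cov}\bigl(f(\sqrt{\tau}B_{H}(1)),g(\sqrt{\tau}(B_{H}(j)-B_{H}(j-1)))\bigr)\,d\tau$. That identity is the law of total covariance with the term $\mathrm{Cov}\bigl(\mathbb{E}(f\mid Y_{\beta}),\mathbb{E}(g\mid Y_{\beta})\bigr)=\mathrm{Cov}\bigl(F(Y_{\beta}),G(Y_{\beta})\bigr)$ omitted, where $F(\tau)=\mathbb{E}f(\sqrt{\tau}B_{H}(1))$ and, by stationarity, $G(\tau)=\mathbb{E}g(\sqrt{\tau}B_{H}(1))$; this term is independent of $j$ and nonzero in general. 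In other words, the paper's argument silently discards exactly the obstruction you named. So your proposal does not prove the lemma, but the reason it cannot be completed is instructive: it locates a genuine problem with the statement for $0<\beta<1$ rather than a fixable defect in your strategy.
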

\begin{proof}
Since the process $B_{\alpha,\beta}(t)$, $t\geq0$, has stationary
increments then the sequence $\big(B_{\alpha,\beta}(j)-B_{\alpha,\beta}(j-1)\big)_{j\geq1}$
is stationary. To show that it is also mixing, it is sufficient to
prove the decay of correlations

\begin{equation}
\lim_{j\rightarrow+\infty}\mathrm{Cov}\Big(f\big(B_{\alpha,\beta}(1)\big),g\big(B_{\alpha,\beta}(j)-B_{\alpha,\beta}(j-1)\big)\Big)=0,\label{mix cond}
\end{equation}
for all bounded measurable $f,g$. It follows from the representation
(\ref{gbm-rep}) that
\begin{multline*}
\mathrm{Cov}\Big(f\big(B_{\alpha,\beta}(1)\big),g\big(B_{\alpha,\beta}(j)-B_{\alpha,\beta}(j-1)\big)\Big)\\
=\int_{0}^{\infty}M_{\beta}(\tau)\,\mathrm{Cov}\left(f\left(\tau^{\nicefrac{1}{2}}B_{H}(1)\right),g\left(\tau^{\nicefrac{1}{2}}\left(B_{H}(j)-B_{H}(j-1)\right)\right)\right)d\tau.
\end{multline*}
It is well known that the sequence $\left(B_{H}(j)-B_{H}(j-1)\right)_{j\geq1}$
is stationary, centered Gaussian with covariance function satisfying
\[
\mathrm{Cov}\big(B_{H}(1)-B_{H}(0),B_{H}(j)-B_{H}(j-1)\big)\xrightarrow[j\rightarrow+\infty]{}0.
\]
Therefore it is mixing, that is 
\[
\underset{j\rightarrow+\infty}{\lim}\mathrm{Cov}\left(f\left(B_{H}(1)\right),g\Big(\big(B_{H}(j)-B_{H}(j-1)\big)\Big)\right)=0,
\]
for all bounded measurable $f,g$. Now (\ref{mix cond}) follows from
the dominated convergence theorem.
\end{proof}
\begin{lem}
\label{lem: convergence}The sequence $\left(\frac{1}{n}\sum_{j=1}^{n}\big|B_{\alpha,\beta}(j)-B_{\alpha,\beta}(j-1)\big|^{p}\right)_{n\geq1}$
converges a.s. (and in $L^{1}$) to $\mathbb{E}\left(\left|B_{\alpha,\beta}(1)\right|^{p}\right)$.
\end{lem}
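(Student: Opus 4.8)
My plan is to identify the sequence of averages as the ergodic means of a stationary ergodic process and then invoke Birkhoff's ergodic theorem. Writing $X_j := B_{\alpha,\beta}(j)-B_{\alpha,\beta}(j-1)$, the preceding lemma tells me that $(X_j)_{j\geq1}$ is stationary and ergodic. Since $x\mapsto|x|^p$ is a fixed measurable map, the coordinatewise image $\big(|X_j|^p\big)_{j\geq1}$ is a factor of the same shift system; because factors of ergodic systems are ergodic, the sequence $\big(|X_j|^p\big)_{j\geq1}$ is itself stationary and ergodic. This reduces the lemma to a direct application of the ergodic theorem, provided the integrability hypothesis holds.

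The one computation I would carry out is to check that $\mathbb{E}\big(|B_{\alpha,\beta}(1)|^p\big)<\infty$. Using the representation (\ref{gbm-rep}) and the independence of $Y_{\beta}$ from $B_{H}$, I obtain
\[
\mathbb{E}\big(|B_{\alpha,\beta}(1)|^p\big)=\mathbb{E}\big(Y_{\beta}^{p/2}\big)\,\mathbb{E}\big(|B_{H}(1)|^p\big).
\]
The moment formula (\ref{moments}) gives $\mathbb{E}\big(Y_{\beta}^{p/2}\big)=\Gamma(p/2+1)/\Gamma(\beta p/2+1)$, which is finite because $p/2>-1$; and $\mathbb{E}\big(|B_{H}(1)|^p\big)<\infty$ since $B_{H}(1)$ is Gaussian and thus has finite absolute moments of every order. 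Hence $\mathbb{E}\big(|X_1|^p\big)<\infty$.

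With integrability established, Birkhoff's ergodic theorem applied to the stationary ergodic sequence $\big(|X_j|^p\big)_{j\geq1}$ yields
\[
\frac{1}{n}\sum_{j=1}^{n}|X_j|^p\longrightarrow\mathbb{E}\big(|X_1|^p\big)=\mathbb{E}\big(|B_{\alpha,\beta}(1)|^p\big)
\]
almost surely, while the $L^{1}$ convergence is the companion statement of the ergodic theorem for integrable observables (equivalently, it follows from uniform integrability of the ergodic averages of an $L^{1}$ function). This is precisely the claimed limit.

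I do not anticipate a genuine obstacle here: the substantive work was done in the previous lemma, and once ergodicity of $(X_j)_{j\geq1}$ is available the result is a textbook application of the ergodic theorem. The only points demanding care are the transfer of ergodicity through the map $x\mapsto|x|^p$, which I would justify by the factor argument above, and the moment bound, for which the explicit $M$-Wright moments (\ref{moments}) make finiteness immediate.
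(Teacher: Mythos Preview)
Your proposal is correct and follows exactly the paper's approach: the paper's proof is a one-line invocation of the ergodic theorem (citing Shiryaev) based on the ergodicity established in the preceding lemma. Your write-up is in fact more careful than the paper's, since you explicitly verify the integrability hypothesis $\mathbb{E}\big(|B_{\alpha,\beta}(1)|^{p}\big)<\infty$ via the representation and the $M$-Wright moment formula, a step the paper leaves implicit.
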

\begin{proof}
Since $\big(B_{\alpha,\beta}(j)-B_{\alpha,\beta}(j-1)\big)_{j\geq1}$
is ergodic then as a consequence of the ergodic theorem (Theorem 3.3
p.\ 413 of \cite{S96}), we obtain that $\Big(\frac{1}{n}\sum_{j=1}^{n}\big|B_{\alpha,\beta}(j)-B_{\alpha,\beta}(j-1)\big|^{p}\Big)_{n\geq1}$
converge a.s.\ (and in $L^{1}$) to $\mathbb{E}\left(\left|B_{\alpha,\beta}(1)\right|^{p}\right)$.
\end{proof}
As a consequence we obtain 
\begin{prop}
We have the following limit in probability 
\[
\lim_{n\rightarrow+\infty}n^{p\frac{\alpha}{2}-1}\sum_{j=1}^{n}\left|B_{\alpha,\beta}\left(\frac{j}{n}\right)-B_{\alpha,\beta}\left(\frac{j-1}{n}\right)\right|^{p}=\mathbb{E}\left(\left|B_{\alpha,\beta}(1)\right|^{p}\right).
\]
\end{prop}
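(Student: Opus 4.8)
The plan is to reduce the statement, via the $\frac{\alpha}{2}$-self-similarity of the ggBm, to the integer-grid convergence already established in Lemma~\ref{lem: convergence}, and then to upgrade an equality in distribution into a convergence in probability.

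First I would exploit self-similarity. Since $B_{\alpha,\beta}$ is $\frac{\alpha}{2}$-self-similar with stationary increments, for every fixed $n\geq1$ one has the equality of finite-dimensional distributions
\[
\big\{B_{\alpha,\beta}(t/n),\,t\geq0\big\}\overset{d}{=}\big\{n^{-\alpha/2}B_{\alpha,\beta}(t),\,t\geq0\big\}.
\]
Evaluating jointly at the integers $t=j$ and $t=j-1$ and forming the increments, this gives
\[
\big(B_{\alpha,\beta}(j/n)-B_{\alpha,\beta}((j-1)/n)\big)_{j=1}^{n}\overset{d}{=}n^{-\alpha/2}\big(B_{\alpha,\beta}(j)-B_{\alpha,\beta}(j-1)\big)_{j=1}^{n}.
\]
Consequently, for each fixed $n$,
\[
n^{p\frac{\alpha}{2}-1}\sum_{j=1}^{n}\Big|B_{\alpha,\beta}\big(\tfrac{j}{n}\big)-B_{\alpha,\beta}\big(\tfrac{j-1}{n}\big)\Big|^{p}\overset{d}{=}\frac{1}{n}\sum_{j=1}^{n}\big|B_{\alpha,\beta}(j)-B_{\alpha,\beta}(j-1)\big|^{p},
\]
since the factor $n^{p\alpha/2}$ produced by the scaling cancels against $n^{p\alpha/2-1}$, leaving the averaging factor $1/n$.

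Next I would invoke Lemma~\ref{lem: convergence}, which guarantees that the right-hand side above converges almost surely (and in $L^{1}$) to the constant $\mathbb{E}(|B_{\alpha,\beta}(1)|^{p})$. To transfer this to the left-hand side I would use the elementary principle that convergence in probability to a constant is equivalent to convergence in distribution to that constant, and that the latter depends only on the laws of the random variables involved. Since, for every $n$, the two sides share the same distribution and the right-hand side converges in distribution to the degenerate law at $\mathbb{E}(|B_{\alpha,\beta}(1)|^{p})$, so does the left-hand side; hence the left-hand side converges in probability to $\mathbb{E}(|B_{\alpha,\beta}(1)|^{p})$.

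The main subtlety to watch is that the self-similarity relation is only an equality in distribution for each fixed $n$ and not a pathwise identity: the two sides are realized on a priori different realizations. For this reason one cannot deduce almost-sure convergence of the fine-grid sums on the left from the a.s.\ convergence of the integer-grid averages, and the conclusion must therefore be stated as a limit in probability, exactly as in the statement. Everything else is routine bookkeeping of the scaling exponents.
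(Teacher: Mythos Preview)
Your argument is correct and matches the paper's own proof essentially step for step: use $\tfrac{\alpha}{2}$-self-similarity to identify the law of the rescaled fine-grid sum with that of the ergodic average, apply Lemma~\ref{lem: convergence}, and then upgrade convergence in law to a constant to convergence in probability. Your additional remarks on the scaling bookkeeping and on why only convergence in probability (not a.s.) can be concluded are accurate and make the reasoning more transparent than the paper's terse version.
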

\begin{proof}
It follows from the $\frac{\alpha}{2}$-self-similarity of $B_{\alpha,\beta}(t)$,
$t\geq0$, that for all $n\in\mathbb{N}$, we have the following equality
in law 
\[
Z_{n,p}:=n^{p\frac{\alpha}{2}-1}\sum_{j=1}^{n}\left|B_{\alpha,\beta}\left(\frac{j}{n}\right)-B_{\alpha,\beta}\left(\frac{j-1}{n}\right)\right|^{p}=\frac{1}{n}\sum_{j=1}^{n}\left|B_{\alpha,\beta}(j)-B_{\alpha,\beta}(j-1)\right|^{p}.
\]
This, together with the convergence in Lemma \ref{lem: convergence}
gives that $Z_{n,p}$ converges in law to $\mathbb{E}\left(\left|B_{\alpha,\beta}(1)\right|^{p}\right)$.
Since the limit is a constant so the convergence in probability follows. 
\end{proof}
As a consequence of ${\bf {(ii)}}$ and ${\bf {(iii)}}$ we get the
following result on the $p$-variation of the ggBm. 
\begin{prop}
\label{prop: variation}We have the following limit in probability
\[
V_{p,n}:=\sum_{j=1}^{n}\left|B_{\alpha,\beta}\left(\frac{j}{n}\right)-B_{\alpha,\beta}\left(\frac{j-1}{n}\right)\right|^{p}\xrightarrow[n\rightarrow+\infty]{}\left\{ \begin{array}{lll}
0 & \text{a.s. if} & p\alpha/2>1\\
\infty & \text{a.s. if} & p\alpha/2<1\\
\mathbb{E}\left(\left|B_{\alpha,\beta}(1)\right|^{p}\right) & \text{a.s. if} & p=2/\alpha.
\end{array}\right.
\]
\end{prop}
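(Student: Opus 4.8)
The plan is to read off all three regimes from the preceding proposition, which supplies, for every fixed $p>0$, the convergence in probability
\[
Z_{n,p}:=n^{p\alpha/2-1}V_{p,n}\xrightarrow[n\to\infty]{P}c_p:=\mathbb{E}\big(|B_{\alpha,\beta}(1)|^{p}\big).
\]
First I would record that $c_p\in(0,\infty)$. Finiteness follows from the subordination representation \eqref{gbm-rep}, since $\mathbb{E}(|B_{\alpha,\beta}(1)|^{p})=\mathbb{E}(Y_\beta^{p/2})\,\mathbb{E}(|B_H(1)|^{p})$ by independence, the first factor being finite by \eqref{moments} and the second by Gaussianity; positivity is immediate because $B_{\alpha,\beta}(1)$ is nondegenerate. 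The whole argument then rests on rewriting
\[
V_{p,n}=n^{1-p\alpha/2}\,Z_{n,p},
\]
so that the trichotomy is governed entirely by the sign of the exponent $1-p\alpha/2$.

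In the critical case $p=2/\alpha$ the exponent vanishes, so $V_{p,n}=Z_{n,p}$ and the limit $c_p$ is exactly the preceding proposition. In the supercritical case $p\alpha/2>1$ the deterministic prefactor $n^{1-p\alpha/2}$ tends to $0$; since $Z_{n,p}$ converges in probability it is bounded in probability, and the product of a deterministic null sequence with a sequence bounded in probability tends to $0$ in probability, whence $V_{p,n}\to0$.

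The subcritical case $p\alpha/2<1$ is where I expect the only genuine care to be needed, since the claim is convergence to $+\infty$ and relies on the strict positivity of $c_p$. Here $n^{1-p\alpha/2}\to\infty$, and for every $K>0$ one has
\[
P\big(V_{p,n}\le K\big)=P\big(Z_{n,p}\le K\,n^{p\alpha/2-1}\big).
\]
Because $p\alpha/2-1<0$, the threshold $K\,n^{p\alpha/2-1}$ decreases to $0$, so for all large $n$ it lies below $c_p/2$; then $\{Z_{n,p}\le K\,n^{p\alpha/2-1}\}\subseteq\{|Z_{n,p}-c_p|\ge c_p/2\}$, whose probability tends to $0$. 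Hence $P(V_{p,n}\le K)\to0$ for every $K$, which is precisely $V_{p,n}\to+\infty$ in probability.

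Two comments on the statement. First, the per-case labels should read ``in probability'' rather than ``a.s.'': the preceding proposition rests on an identity in distribution valid for each fixed $n$, not on a pathwise identity, so only convergence in law to the constant $c_p$, equivalently in probability, is available. Second, the same trichotomy is the announced pathwise consequence of \textbf{(ii)} and \textbf{(iii)}: the H\"older-type inequality proving \textbf{(ii)}, applied to the uniform partition $\{j/n\}_{j=0}^{n}$, gives $V_{q,n}\le M_n^{\,q-p}V_{p,n}$ for $q>p$, where $M_n:=\max_{1\le j\le n}|B_{\alpha,\beta}(j/n)-B_{\alpha,\beta}((j-1)/n)|\to0$ a.s.\ by uniform continuity of the sample paths on $[0,1]$. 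Taking $p=2/\alpha$ forces $V_{q,n}\to0$ for $q>2/\alpha$, while taking $q=2/\alpha$ and rearranging gives $V_{p,n}\ge M_n^{-(2/\alpha-p)}V_{2/\alpha,n}\to+\infty$ for $p<2/\alpha$, matching the computation above.
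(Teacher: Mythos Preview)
Your argument is correct. The paper itself offers no detailed proof: it simply announces the proposition ``as a consequence of \textbf{(ii)} and \textbf{(iii)}'', i.e., the pathwise monotonicity of $p$-variations that you spell out in your second comment. Your primary route---writing $V_{p,n}=n^{1-p\alpha/2}Z_{n,p}$ and reading the three regimes off the sign of the exponent---is a genuinely different and more self-contained derivation: it uses only the convergence in probability furnished by the preceding proposition and never requires a pathwise limit in the critical case, whereas the bare appeal to \textbf{(ii)}--\textbf{(iii)} tacitly presupposes one. Your remark that the per-case labels should read ``in probability'' rather than ``a.s.'' is also on point; the paper recovers almost-sure convergence only later, along dyadic partitions and via the representation \eqref{gbm-rep} (Remark~\ref{as conv}), not through the chain of results leading to this proposition.
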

\begin{rem}
The ggBm is not a semimartingale. In addition, $B_{\alpha,\beta}$
cannot be of finite variation on $[0,1]$ and by scaling and stationarity
of the increment on any interval. 
\end{rem}
\begin{proof}
Indeed there is a subsequence such that $V_{p,n}$ converge almost
surely to $\infty$ for $p=1$ and $\alpha\in\left(0,2\right)$. If
$\alpha\in\left(1,2\right)$ we can choose $p\in\left(2/\alpha,2\right)$
such that $V_{p,n}$ converge to $0$ for some subsequence. This implies
that the quadratic variation of $B_{\alpha,\beta}$ is zero. If $\alpha\in\left(0,1\right)$
we can choose $p>2$ such that $2p/\alpha<1$ and the $p$-variation
of $B_{\alpha,\beta}$ must be infinite. So, in any case $B_{\alpha,\beta}$
can not be a semimartingale.
\end{proof}
\begin{rem}
\label{as conv} It follows from the representation (\ref{gbm-rep})
that the H{\"o}lder continuity of the trajectories of ggBm reduces
to the H{\"o}lder continuity of the fBm. Thus, with probability $1$,
we have 
\[
\sum_{j=1}^{2^{n}}\left|B_{\alpha,\beta}\left(\frac{j}{2^{n}}\right)-B_{\alpha,\beta}\left(\frac{j-1}{2^{n}}\right)\right|^{p}\xrightarrow[n\rightarrow+\infty]{}\left\{ \begin{array}{lll}
0 & \text{a.s. if} & p\alpha/2>1\\
\infty & \text{a.s. if} & p\alpha/2<1\\
\mathbb{E}\left(\left|B_{\alpha,\beta}(1)\right|^{p}\right) & \text{a.s. if} & p=2/\alpha.
\end{array}\right.
\]
\end{rem}

\subsection{Singularity of generalized grey Brownian motions with different parameters}

Since the law of the generalized grey Brownian motion is not Gaussian,
Feldman-H{\'a}jek dichotomy is no longer applicable. However, using
the same approach as for the fBm, we establish that the laws of two
ggBm processes are singular if the parameters are different. Let us
denotes by $P_{\alpha,\beta}$ (resp.\ $P_{\alpha',\beta'}$) the
probability measures generated by $B_{\alpha,\beta}$ (resp.\ $B_{\alpha',\beta'}$)
with $\alpha,\alpha'\in(0,2)$ and $\beta,\beta'\in(0,1]$.
\begin{thm}
\label{thm:ggnm-orthogonal}
\begin{enumerate}
\item For any $\beta,\beta'\in(0,1]$ such that $\Gamma(\nicefrac{\beta}{\alpha}+1)\neq\Gamma(\nicefrac{\beta'}{\alpha}+1)$,
the probability measures $P_{\alpha,\beta}$ and $P_{\alpha,\beta'}$
are singular with respect to each other for all $\alpha\in(0,2)$.
\item For $\alpha\neq\alpha'$ then for any $\beta,\beta'\in(0,1]$ the
probability measures $P_{\alpha,\beta}$ and $P_{\alpha',\beta'}$
are singular with respect to each other. 
\end{enumerate}
\end{thm}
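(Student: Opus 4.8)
The plan is to follow the fBm template of Section~\ref{sec:singularity-fBm} verbatim at the level of sets, using the dyadic $p$-variation trichotomy of Remark~\ref{as conv} as the analogue of Lemma~\ref{set meas} and the inclusion properties ${\bf (i)}$--${\bf (iii)}$ as the analogue of Lemma~\ref{lem: sing}. The only genuinely new input is an explicit evaluation of the critical variation constant $\mu_{\alpha,\beta}:=\mathbb{E}\big(|B_{\alpha,\beta}(1)|^{2/\alpha}\big)$, and I expect this to be the main obstacle: everything measure-theoretic afterwards is an immediate transcription of the Gaussian argument, since the critical exponent $p=2/\alpha=1/H$ with $H=\alpha/2$ plays exactly the role that $1/H$ did for fBm.

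First I would compute $\mu_{\alpha,\beta}$ from the subordination representation~\eqref{gbm-rep}, namely $B_{\alpha,\beta}(1)\overset{d}{=}\sqrt{Y_\beta}\,B_H(1)$ with $H=\alpha/2$, where $B_H(1)$ is standard normal (consistent with $\mathbb{E}(B_{\alpha,\beta}^2(1))=1/\Gamma(\beta+1)$ and $\mathbb{E}(Y_\beta)=1/\Gamma(\beta+1)$ via~\eqref{moments} with $\delta=1$) and $Y_\beta$ is independent with density $M_\beta$. Factoring the $(2/\alpha)$-moment through this independence and applying~\eqref{moments} with $\delta=1/\alpha$ should give
\[
\mu_{\alpha,\beta}=\mathbb{E}\big(Y_\beta^{1/\alpha}\big)\,\mathbb{E}\big(|B_H(1)|^{2/\alpha}\big)=\frac{\Gamma(1/\alpha+1)}{\Gamma(\beta/\alpha+1)}\,c_\alpha,
\]
where $c_\alpha:=\mathbb{E}\big(|B_H(1)|^{2/\alpha}\big)>0$ depends on $\alpha$ alone. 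Then Remark~\ref{as conv} translates into $P_{\alpha,\beta}\big(C^{2/\alpha}_{\mu_{\alpha,\beta}}\big)=1$ for every admissible pair, the exact counterpart of Lemma~\ref{set meas}. The delicate point is confirming that for fixed $\alpha$ all the $\beta$-dependence of $\mu_{\alpha,\beta}$ sits in the factor $1/\Gamma(\beta/\alpha+1)$.

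For part~1 both processes share the same $\alpha$, hence the same critical exponent $2/\alpha$. Since the prefactor $\Gamma(1/\alpha+1)c_\alpha$ is common and strictly positive, the hypothesis $\Gamma(\beta/\alpha+1)\neq\Gamma(\beta'/\alpha+1)$ is precisely the assertion $\mu_{\alpha,\beta}\neq\mu_{\alpha,\beta'}$. By the disjointness in ${\bf (i)}$ the sets $C^{2/\alpha}_{\mu_{\alpha,\beta}}$ and $C^{2/\alpha}_{\mu_{\alpha,\beta'}}$ are then disjoint, so that $P_{\alpha,\beta}\big(C^{2/\alpha}_{\mu_{\alpha,\beta}}\big)=1$ while $P_{\alpha,\beta'}\big(C^{2/\alpha}_{\mu_{\alpha,\beta}}\big)\le P_{\alpha,\beta'}\big((C^{2/\alpha}_{\mu_{\alpha,\beta'}})^{c}\big)=0$, which is singularity.

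For part~2 I would assume without loss of generality $\alpha<\alpha'$, so that $2/\alpha>2/\alpha'$, and note $\mu_{\alpha,\beta}>0$. Applying ${\bf (ii)}$ with $p=2/\alpha'$, $q=2/\alpha$ and $\lambda=\mu_{\alpha',\beta'}$ gives $C^{2/\alpha'}_{\mu_{\alpha',\beta'}}\subset C^{2/\alpha}_0$, whence $P_{\alpha',\beta'}\big(C^{2/\alpha}_0\big)=1$. On the other hand $P_{\alpha,\beta}\big(C^{2/\alpha}_{\mu_{\alpha,\beta}}\big)=1$, and since $\mu_{\alpha,\beta}>0$ the set $C^{2/\alpha}_{\mu_{\alpha,\beta}}$ is disjoint from $C^{2/\alpha}_0$ by ${\bf (i)}$, so $P_{\alpha,\beta}\big(C^{2/\alpha}_0\big)=0$. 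Taking the carrying set $A=C^{2/\alpha}_0$ yields $P_{\alpha',\beta'}(A)=1$ and $P_{\alpha,\beta}(A)=0$, so the measures are singular; here no condition on $\beta,\beta'$ is needed because the disjointness already arises from the mismatch of the self-similarity exponents $2/\alpha\neq2/\alpha'$ rather than from the value of the limit.
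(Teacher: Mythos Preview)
Your proposal is correct and follows essentially the same approach as the paper's own proof: both use Remark~\ref{as conv} to obtain $P_{\alpha,\beta}\big(C^{2/\alpha}_{\mu_{\alpha,\beta}}\big)=1$, compute $\mu_{\alpha,\beta}=\dfrac{\Gamma(1/\alpha+1)}{\Gamma(\beta/\alpha+1)}\,\mathbb{E}\big(|B_H(1)|^{2/\alpha}\big)$ via the subordination~\eqref{gbm-rep} and the moment formula~\eqref{moments}, and then conclude part~1 from the disjointness in~{\bf (i)} and part~2 from the inclusion $C^{2/\alpha'}_{\mu_{\alpha',\beta'}}\subset C^{2/\alpha}_0$ of~{\bf (ii)}. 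Your write-up is in fact slightly more explicit than the paper's in spelling out the positivity $\mu_{\alpha,\beta}>0$ and the choice of carrying set $A=C^{2/\alpha}_0$.
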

\begin{rem}
Before proceeding to the proof of the theorem we will give some examples
of $\beta,\beta'\in(0,1]$ for which the condition $\Gamma(\nicefrac{\beta}{\alpha}+1)\neq\Gamma(\nicefrac{\beta'}{\alpha}+1)$
is verified. It is well known that there exists $\kappa\in(1,2)$
such that the $\Gamma$ function is strictly decreasing on $(0,\kappa]$
and strictly increasing on $[\kappa,\infty)$. So, for $\beta,\beta'\in(0,\alpha(\kappa-1)\wedge1]$
or $\beta,\beta'\in[\alpha(\kappa-1)\wedge1,1]$ the condition is
satisfied.
\end{rem}
\begin{proof}[Proof of Theorem\ \ref{thm:ggnm-orthogonal}]
It follows from Remark \ref{as conv} that 
\begin{equation}
\lim_{n\rightarrow+\infty}\sum_{j=1}^{2^{n}}\left|x\left(\dfrac{j}{2^{n}}\right)-x\left(\dfrac{j-1}{2^{n}}\right)\right|^{\nicefrac{2}{\alpha}}=\mathbb{E}\left(\left|B_{\alpha,\beta}(1)\right|^{2/\alpha}\right)=:\mu_{\alpha,\beta},\quad P_{\alpha,\beta}-a.s.,\label{eq:lim var 1}
\end{equation}
and 
\begin{equation}
\lim_{n\rightarrow+\infty}\sum_{j=1}^{2^{n}}\left|x\left(\dfrac{j}{2^{n}}\right)-x\left(\dfrac{j-1}{2^{n}}\right)\right|^{\nicefrac{2}{\alpha'}}=\mathbb{E}\left(\left|B_{\alpha',\beta'}(1)\right|^{2/\alpha'}\right)=:\mu_{\alpha',\beta'},\; P_{\alpha',\beta'}-a.s.\label{eq:lim var 2}
\end{equation}

In other words, $P_{\alpha,\beta}(C_{\mu_{\alpha,\beta}}^{\nicefrac{2}{\alpha}})=1$
and $P_{\alpha',\beta'}(C_{\mu_{\alpha',\beta'}}^{\nicefrac{2}{\alpha'}})=1$.
\begin{enumerate}
\item For $(\alpha,\beta)\neq(\alpha,\beta')$, using the representation
\eqref{gbm-rep}, the independence of $Y_{\beta}$ and $B_{H}$ and
the moments formula \eqref{moments} we obtain 
\[
\mathbb{E}\left(\left|B_{\alpha,\beta}(1)\right|^{\nicefrac{2}{\alpha}}\right)=\frac{\Gamma(\nicefrac{1}{\alpha}+1)}{\Gamma(\nicefrac{\beta}{\alpha}+1)}\mathbb{E}\left(\left|B_{H}(1)\right|^{\nicefrac{2}{\alpha}}\right),
\]
and 
\[
\mathbb{E}\left(\left|B_{\alpha,\beta'}(1)\right|^{\nicefrac{2}{\alpha}}\right)=\frac{\Gamma(\nicefrac{1}{\alpha}+1)}{\Gamma(\nicefrac{\beta'}{\alpha}+1)}\mathbb{E}\left(\left|B_{H}(1)\right|^{\nicefrac{2}{\alpha}}\right).
\]
Since $\Gamma(\nicefrac{\beta}{\alpha}+1)\neq\Gamma(\nicefrac{\beta'}{\alpha}+1)$
it is clear that for the limits \eqref{eq:lim var 1} and \eqref{eq:lim var 2}
are different, that is $\mu_{\alpha,\beta}\neq\mu_{\alpha,\beta'}$.
The singularity of $P_{\alpha,\beta}$ and $P_{\alpha,\beta'}$ follows
from the fact that the sets $C_{\mu_{\alpha,\beta}}^{\nicefrac{2}{\alpha}}$
and $C_{\mu_{\alpha,\beta'}}^{\nicefrac{2}{\alpha}}$ are disjoints.
\item For $(\alpha',\beta')\neq(\alpha,\beta)$ ($\alpha<\alpha'$ for example)
it follows from \textbf{(ii)} that $C_{\mu_{\alpha',\beta'}}^{\nicefrac{2}{\alpha'}}\subset C_{0}^{\nicefrac{2}{\alpha}}$.
Hence we deduce, from \textbf{(i)} that the measures $P_{\alpha,\beta}$
and $P_{\alpha',\beta'}$ are singular with respect to each other.\hfill $\qedhere$
\end{enumerate}
\end{proof}

\subsection*{Acknowledgments}

We would like to thank the financial support of the Laboratory LIBMA
form the University Cadi Ayyad Marrakech and the project I\&D: UID/MAT/04674/2013.

\bibliographystyle{alpha}

\begin{thebibliography}{MM09}

\bibitem[CM47]{Cameron-Martin47}
R.~H. Cameron and W.~T. Martin.
\newblock The behavior of measure and measurability under change of scale in
  {W}iener space.
\newblock {\em Bull.\ Amer.\ Math.\ Soc.}, {53}({2}):{130--137}, {1947}.

\bibitem[Fel58]{Feldman58}
J.~Feldman.
\newblock {Equivalence and perpendicularity of Gaussian processes.}
\newblock {\em Pacific J.\ Math.}, 8(4):699--708, 1958.

\bibitem[H{\'a}j58]{Hajek58}
J~H{\'a}jek.
\newblock On a property of normal distribution of any stochastic process.
\newblock {\em Czechoslovak Math.\ J.}, 8 (83):610--618, 1958.

\bibitem[Kur03]{Kurchenko03}
O.~O. Kurchenko.
\newblock {A consistent estimate of the Hurst parameter for a fractional
  Brownian motion}.
\newblock {\em Theory Probab.\ Math.\ Statist.}, 67:97--106, 2003.

\bibitem[MM09]{Mura_mainardi_09}
A.~Mura and F.~Mainardi.
\newblock A class of self-similar stochastic processes with stationary
  increments to model anomalous diffusion in physics.
\newblock {\em Integr.\ Transf.\ Spec.\ F.}, 20(3-4):185--198, 2009.

\bibitem[MP08]{Mura_Pagnini_08}
A.~Mura and G.~Pagnini.
\newblock Characterizations and simulations of a class of stochastic processes
  to model anomalous diffusion.
\newblock {\em J.\ Phys.\ A: Math.\ Theor.}, 41(28):285003, 22, 2008.

\bibitem[PR08]{Rao2008}
B.~L.~S. Prakasa~Rao.
\newblock {Singularity of fractional Brownian motions with different Hurst
  indices}.
\newblock {\em Stoch.\ Anal.\ Appl.}, 26(2):334--337, January 2008.

\bibitem[Rog97]{Rogers1997}
L.~C.~G. Rogers.
\newblock {Arbitrage with fractional Brownian motion}.
\newblock {\em Math.\ Finance}, 7(1):95--105, January 1997.

\bibitem[Shi96]{S96}
A.~N. Shiryaev.
\newblock {\em Probability}.
\newblock Springer Verlag, New York Berlin Heidelgerg, 1996.

\end{thebibliography}

\end{document}